\newtheorem*{theorem*}{Theorem}
\newtheorem{teo}{Theorem}[section]
\newtheorem{prop}[teo]{Proposition}
\newtheorem{lem}[teo]{Lemma}
\newtheorem{coro}[teo]{Corollary}
\newtheorem{rem}[teo]{Remark}
\def\bh{{\cal B}({\cal H})}
\def\kh{{\mathscr K}({\cal H})}
\def\ka{{\mathscr K}({\widetilde{\a}})}
\def\pa{\mathcal P(\a)}
\def\elep{{\mathscr L}^p({\a})}
\def\l1{{\mathscr L}^1({\a})}
\def\loclp{{\mathscr L}^p_{loc}({\a})}
\def\lo1{{\mathscr L}^1_{loc}({\a})}
\def\loc2{{\mathscr L}^2_{loc}({\a})}
\def\ran{{\rm{Ran\, }}}
\def\a{\mathcal A}
\def\be{b_{\varepsilon}}
\def\pp1{\overline{p_1}}
\def\ppp1{\overline{\overline{p_1}}}
\begin{document}

\title{\vspace*{0cm}The case of equality in Young's inequality for the $s$-numbers in semi-finite von Neumann algebras.\footnote{2010 MSC.  Primary 47B06, 47A63;  Secondary 47A30.}}




\date{}
\author{G. Larotonda\footnote{Supported by Instituto Argentino de Matem\'atica(CONICET), Universidad de Buenos Aires and ANPCyT.}}

\maketitle

\setlength{\parindent}{0cm} 

\begin{abstract}
\noindent For a semi-finite von Neumann algebra $\a$, we study the case of equality in Young's inequality of $s$-numbers for a pair of $\tau$-measurable operators $a,b$, and we prove that equality is only possible if $|a|^p=|b|^q$. We also extend the result to unbounded operators affiliated with $\a$, and relate this problem with other symmetric norm Young inequalities.
\end{abstract}

\section{Introduction}\label{intro1}

The well-known inequality, valid for $p>1$ and $1/p+1/q=1$, named after W. H. Young, is usually stated as
$$
\alpha\beta\le \nicefrac{1}{p}\,\alpha^p+\nicefrac{1}{q}\,\beta^q
$$
for any $\alpha,\beta\in\mathbb R^+$, \textit{with equality if and only if} $\alpha^p=\beta^q$. 

\medskip

In this paper, we establish an analogue for \textit{the case of equality} in the setting of operators affiliated to semi-finite von Neumann algebras. For more references and further discussion on the subject of Young's inequality for matrices and operators, we refer the reader to \cite{hycomp} where the proof is given for the particular case of compact operators in $\mathcal B(H)$ -the discrete (or atomic measure) case- of this fact. In particular, we remark that it was the fundamental paper by T. Ando \cite{ando} which initiated the study of Young's inequality for the singular values of $n\times n$ matrices.

\medskip

The emphasis in this paper is in the measure theoretic approach to operators affiliated with a semi-finite von Neumann algebra, since the approach by induction used in \cite{hycomp} is not at hand. The inequality for $s$-numbers of operators $a,b$ affiliated with a semi-finite von Neuman algebra $\mathcal A$, is stated as
\begin{equation}\label{snum}
\mu_s(ab^*)\le \mu_s\left(\nicefrac1p \,|a|^p +\nicefrac1q\, |b|^q\right),\;s> 0
\end{equation}
and extended here to unbounded operators; we are interested in the case of equality. 

\smallskip

We remark that this result includes all  semi-finite von Neumann algebras $\mathcal A$, since by a standard tensor product technique \cite[p.286]{fk}, we can always embed $\mathcal A$ into the diffuse algebra $\mathcal A\otimes \mathscr{L}^{\infty}([0,1],dt)$ without altering the $s$-numbers.

\bigskip

This paper is organized as follows: Section 2 presents the general facts about $s$-numbers recalling the well-known and establishing some simple lemmas used later. Section 3 deals with some simplifications and reductions of the problem to deal with it in full generality. Section 4, after certain technical propositions, contains the main result of this paper, Theorem \ref{elteo}, that states that equality holds for all $s$-numbers in (\ref{snum}) if and only if $|a|^p=|b|^q$, or equivalently, if equality of norms 
$$
\|ab^*\|_E=\|\nicefrac1p \,|a|^p +\nicefrac1q \,|b|^q\|_E
$$
holds for some \textit{strictly increasing symmetric norm} $\|\cdot\|_E$ (definition given in Section 4.1, just before the main theorem).

\section{Singular numbers in von Neumann algebras}

In this paper  $\a$ stands for a finite or semi-finite von Neumann algebra with faithful normal trace $\tau$, which when convenient we will assume represented in a complex Hilbert space $\mathcal H$. The set of (self-adjoint) projections in $\a$ will be denoted by $\pa$.

We consider the topology of \textit{convergence in measure} in $\a$: a neighbourhood of $0$ is given by 
$$
V(\varepsilon,\delta)=\{x\in \a: \exists\,  p\in \pa \mbox{ s.t. }\tau(1-p)<\delta \mbox{ and }\|xp\|<\varepsilon\}.
$$ 
We will denote with $\widetilde{\a}$ the closure in measure of $	\a$, therefore $\widetilde{\a}$ is \textit{the ring of $\tau$-measurable operators affiliated with $\a$}. In the atomic case, convergence in measure reduces to the norm topology, therefore $\widetilde{\a}=\a$ in that case. 

\smallskip

For $0\le x\in \widetilde{\a}$ and $s>0$, we denote the $s$-th singular number of $x$ by $\mu_s(x)$:
$$
\mu_s(x)=\inf\{\|xp\|: p\in \pa\mbox{ with } \tau(1-p)\le s\}.
$$
With $\mu_s(a)$ we denote the $s$-numbers of $|a|$, that is $\mu_s(a):=\mu_s(|a|)$. 
We remark that  $\lim\limits_{s\to 0^+}\mu_s(x)=\|x\|$  including the posibility of $+\infty$ when $x$ is unbounded. The standard reference on the subject is the paper by Fack and Kosaki \cite{fk}. 

\medskip

We comment here on some useful characterizations (Proposition 3.1 in \cite{harada}, Proposition 2.2,  Lemma 2.5, Proposition 3.1  in \cite{fk}).
\begin{itemize}
\item The variational (min-max) characterization:
\begin{equation}\label{minmax}
\mu_s(x)=\inf_{\substack{p\in\pa\\ \tau(1-p)\le s}}\big[\sup_{\substack{\xi\in \ran(p) \\ \|\xi\|=1}}\langle x\xi,\xi\rangle \big]=\sup_{\substack{p\in\pa\\ \tau(p)\ge s}}\big[\inf_{\substack{\xi\in \ran(p) \\ \|\xi\|=1}}\|x\xi\| \big].
\end{equation}

\item The distribution characterization: if $B\subset \mathbb R_{\ge 0}$ is a Borelian set and we denote $p^x(B)=\chi_B(|x|)$ (the range projections of $|x|$), then
$$
\mu_t(a)=\min\{s\ge 0: \tau(p^x(s,+\infty))\le t\}.
$$
From the very definition of $\widetilde{\a}$, the number $\tau(p^x(s,+\infty))$ is eventually finite, and moreover $\tau(p^x(s,+\infty))\to 0$ when $s\to \infty$.

\item For $x\in\widetilde{\a}$, the following are equivalent:
\begin{enumerate}
\item $\tau(p^x(t,+\infty))<+\infty$ for all $t>0$.
\item $\lim\limits_{t\to\infty}\mu_t(x)\to 0$.
\item There exists a sequence of bounded operators $x_n\in \l1$ such that $ x_n\to x$ in the measure topology.
\end{enumerate}

\begin{rem}\label{compa}
With any of these three characterizations, we say that $x$ is $\tau$-compact; these operators form a complete bilateral ideal in $\widetilde{\a}$ that we will denote by $\ka$; note that a $\tau$-compact operator is not necessarily bounded. We will denote with $\ka^+$ the positive $(x\ge 0)$ $\tau$-compact operators.

In the atomic case (when $\mathcal A=\bh$), then we recover the ordinary compact operators $\kh$. If $\{\lambda_k(x)\}_{k\in\mathbb N_0}$ denotes the usual singular values of $x$ (i.e. the eigenvalues of $|x|$), and we arrange them in a right-continuos decreasing function which is constant on $[k,k+1)$, then we obtain the distribution function $\mu_s(x)$ as follows:
$$
\mu_s(x)=\sum\limits_{k\in \mathbb N_0} \lambda_k\, \chi_{[k,k+1)}(s)
$$
\end{rem}

\end{itemize}

\medskip

In this lemma we collect some other known facts on $s$-numbers that we will use later.
\begin{lem}\label{singv}
Let $x,y\in \a$, $a,b\in\widetilde{\a}$. Then for each $s>0$,
\begin{enumerate}
\item $\mu_s(xay)\le \|x\|\|y\|\mu_s(a)$, and if $a\le b$ then $\mu_s(a)\le \mu_s(b)$.
\item $\mu_s(|ab^*|)=\mu_s(||a||b||)$.
\item $\mu_{s+t}(a+b)\le \mu_s(a)+\mu_t(b)$, $s,t\ge 0$.
\item If $p\in \pa$ then $\mu_s(ap)=0$ for each $s\ge \tau(p)$.
\item $\tau(|a|)=\int_0^{\infty}\mu_s(a)ds$.
\item If $a,b\ge 0$, $a\in \ka$, $ab=0$ and $\mu_s(a+b)=\mu_s(a)$ for all $s>0$, then $b=0$.
\end{enumerate}
\end{lem}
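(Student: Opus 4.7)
Items 1, 3, and 5 are standard facts that I would cite directly from \cite{fk}. Item 4 is immediate from the definition: if $\tau(p)\le s$, then $1-p$ is admissible in the infimum defining $\mu_s(ap)$, since $\tau(1-(1-p))=\tau(p)\le s$ and $ap(1-p)=0$. Item 2 follows from the polar decompositions $a=u|a|$ and $b=v|b|$ by writing $ab^*=u\,|a|\,|b|\,v^*$ and applying the classical invariance $\mu_s(wx)=\mu_s(x)$ for partial isometries $w$ covering the support of $x$.

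The substantive content is item 6, which I plan to attack through the distribution function $d_x(t):=\tau(p^x(t,+\infty))$. First, $a,b\ge 0$ together with $ab=0$ forces $\ran(b)\subseteq\ker(a)$, so the support projections $p^a,p^b$ are mutually orthogonal. Orthogonality of supports makes the spectral calculus of $a+b$ split block-diagonally, giving $\chi_E(a+b)=\chi_E(a)+\chi_E(b)$ for every Borel $E\subset(0,+\infty)$; tracing yields
$$
d_{a+b}(t)=d_a(t)+d_b(t),\qquad t>0.
$$
Since $a\in\ka$, the distribution characterization recalled above gives $d_a(t)<+\infty$ for every $t>0$. Next, the same characterization shows that $s\mapsto\mu_s(x)$ and $t\mapsto d_x(t)$ are mutual generalized inverses on right-continuous non-increasing functions, so $\mu_\cdot$ determines $d$ uniquely. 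The hypothesis $\mu_s(a+b)=\mu_s(a)$ for all $s>0$ therefore forces $d_{a+b}\equiv d_a$ on $(0,+\infty)$. Combining this with the additivity displayed above and the finiteness of $d_a$, we cancel to obtain $d_b(t)=0$ for all $t>0$; by faithfulness of $\tau$ this means $p^b(t,+\infty)=0$ for every $t>0$, so the spectrum of $b$ reduces to $\{0\}$, and $b\ge 0$ gives $b=0$.

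The main obstacle will be justifying cleanly the additivity $d_{a+b}=d_a+d_b$ in the unbounded $\tau$-measurable setting: the orthogonality of supports does decouple the spectral theorem, but one must be careful with domains and with the behaviour at the endpoint $t=0$. I also want to emphasize that the hypothesis $a\in\ka$ is essential, since it is precisely what allows the cancellation step; without $d_a(t)<+\infty$ the identity $d_a+d_b=d_a$ would convey no information about $d_b$.
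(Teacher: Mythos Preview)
Your proposal is correct and follows essentially the same route as the paper: items 1--5 are handled by citation or immediate argument (the paper cites \cite{fk} for 3, 4, 5 and derives 2 via the polar decomposition of $b$ alone, giving $|ab^*|=\nu\,||a||b||\,\nu^*$ and its companion identity, but this is only a cosmetic variation of your two-sided approach), and for item 6 both you and the paper reduce equality of $\mu$-functions to equality of distribution functions (the paper invokes \cite[Corollary 2.9]{fk} where you invoke the generalized-inverse relationship), then use $ab=0$ to split $\tau(p^{a+b}(t,+\infty))=\tau(p^a(t,+\infty))+\tau(p^b(t,+\infty))$ and cancel via the $\tau$-compactness of $a$. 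Your explicit flagging of the finiteness hypothesis $a\in\ka$ as essential for the cancellation is exactly the point the paper is making.
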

\begin{proof}
The first assertion is a consequence of the min-max characterization of the $s$-numbers. To prove the second, note that if $b=\nu|b|$ is the polar decomposition of $b$, a straitghtforward computation using the functional calculus shows that
$$
|ab^*|=\nu||a||b||\nu^*\,\mbox{ and }\, ||a||b||=\nu|ab^*|\nu.
$$
Then by the first item we obtain $\mu_s(|ab^*|)=\mu_s(||a||b||)$. The proof of the third, fourth and fifth assertion is due to Fack and Kosaki and can be found in their original paper \cite[Lemmas 2.5, 2.6 and Proposition 2.7]{fk}. The final assertion seems evident, but  requires some proof though. For $t>0$, let $p^a[t,+\infty)=\chi_{[t,+\infty)}(a)$ be the spectral projections of $a$, and likewise for $b,a+b$. Then $\mu_s(a+b)=\mu_s(a)$ for all $s>0$ implies (since $a$ is $\tau$-compact and $ab=0$) that 
$$
\tau(p^a(t,+\infty))=\tau(p^{a+b}(t,+\infty))=\tau(p^a(t,+\infty))+\tau(p^b(t,+\infty))
$$
for all $t>0$ (cf. \cite[Corollary 2.9]{fk}). Therefore $\tau(p^b(t,+\infty))=0$ for all $t>0$,  implying $b=0$.
\end{proof}

\section{Diffuse algebras}\label{difusemi}

Recall that an algebra is \textit{diffuse} if it has no minimal projections. Following Fack and Kosaki \cite[p.286]{fk}, we can always embed $\mathcal A$ into the diffuse algebra $\mathcal A\otimes \mathscr{L}^{\infty}([0,1],dt)$ without altering the $s$-numbers. Then, the following \cite[Lemma 2.1]{fk} will be useful later:

\begin{rem}\label{intemu}
If $x\ge 0$ is $\tau$-measurable, then for each $t\ge 0$
$$
\sup\{\tau(xp): p\in\pa,\, \tau(p)\le t\}=\int_0^t \mu_s(x)ds.
$$
\end{rem}

\subsection{Complete flags}

If $0\le x\in \ka$ and $\a$ is a diffuse von Neumann algebra, there exists an increasing  assignment  $\mathbb R_{\ge 0}\ni t\mapsto e_t\in\pa$ ($e_s\le e_t$ for $s\le t$) such that $\tau(e_t)=t$ for all $t\ge 0$ and 
$$
x=\int_0^{\infty} \mu_s(x)de(s).
$$

\textit{Note the analogy with the atomic case, where $x=\sum_{k\in\mathbb N} \lambda_k(x)p_k$ with $p_k$ the projection to the $\lambda_k$ eigenspace of $x$, and we assume the eigenvalues are arranged in decrasing order.}

\medskip

Since $e_0=0$, we denote $e(s,t)=e_t-e_s$ for $s\le t\in \mathbb R_{\ge 0}$ and since $\mathcal A$ is diffuse, 
$$
e_t-e_s=e(s,t)=e[s,t)=e(s,t]=e[s,t].
$$
The spectral resolution $\{e_t\}_{t\ge 0}$ is called a \textit{complete flag} for $x$; for more details on this useful constructions in diffuse semi-finite algebras, we refer the reader to the papers \cite{am1,am2} by Argerami and Massey. In particular, for each $t>0$,
$$
\int_0^t\mu_s(x)ds=\tau(xe_t).
$$

\medskip

\subsection{Equality of singular numbers, $\tau$-compact operators}

Let $(\mathcal A,\tau)$ be a semi-finite von Neumann algebra with semi-finite trace ($\tau(1)=+\infty$ here). 

In \cite[Theorem 1]{fm} Farenick and Manjegani proved the remarkable Young's inequality for the $s$-numbers: if $p>1$, $1/p+1/q=1$,  and $a,b\in \a$, then
\begin{equation}\label{fm}
\mu_s(ab^*)\le \mu_s\left(\nicefrac1p\, |a|^p+\nicefrac1q\, |b|^q \right)
\end{equation}
for all $s\ge 0$. The purpose of this paper is to attack the following conjecture: 

\smallskip

{\it{Let $p,q>1$ with $1/p+1/q=1$. Does}}
\begin{equation}\label{sigual}
\mu_s(ab^*)=\mu_s\left(\nicefrac1p\, |a|^p+\nicefrac1q\, |b|^q\right)
\end{equation}
{\it{for all $s> 0$ imply $|a|^p=|b|^q$?}}

\medskip

\begin{rem}
If the algebra $\mathcal A$ is atomic, we have already answered in the affirmative the conjecture in \cite[Theorem 2.12]{hycomp}. There, we used the existence of eigenvectors for each non-trivial eigenvalue. In this paper we will be dealing with the continuous case (that contains the previous one, see Section \ref{difusemi}), using continuous techniques.
\end{rem}

\subsection{Extension to unbounded operators}

We extend the inequality and the conjecture to unbounded operators. 

\begin{teo}
Let $a,b\in \widetilde{\a}$, then for each $s>0$
\begin{equation}\label{ysf}
\mu_s(ab^*)\le \mu_s\left(\nicefrac1p\, |a|^p+\nicefrac1q\, |b|^q \right).
\end{equation}
\end{teo}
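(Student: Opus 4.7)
The plan is to reduce the problem to the bounded case, i.e.\ the Farenick--Manjegani inequality \eqref{fm}, via a spectral truncation argument. Given $a,b\in\widetilde{\a}$, I set $e_n=\chi_{[0,n]}(|a|)$ and $f_n=\chi_{[0,n]}(|b|)$ in $\pa$, and define the bounded operators $a_n=ae_n,\,b_n=bf_n\in\a$. The very definition of $\tau$-measurability ensures $\tau(1-e_n),\tau(1-f_n)\to 0$, so $a_n\to a$ and $b_n\to b$ in the measure topology; by the joint continuity of multiplication in $\widetilde{\a}$ (a standard property of the $*$-algebra $\widetilde{\a}$), it follows that $a_nb_n^*\to ab^*$ in measure.

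Since $e_n$ commutes with $|a|$, a direct functional calculus argument shows $|a_n|^2=e_n|a|^2e_n=|a|^2e_n$, hence $|a_n|^p=|a|^p e_n$; analogously $|b_n|^q=|b|^q f_n$. These positive operators are dominated by $|a|^p$ and $|b|^q$ respectively, so
\[
y_n:=\frac{1}{p}|a_n|^p+\frac{1}{q}|b_n|^q \;\le\; \frac{1}{p}|a|^p+\frac{1}{q}|b|^q =: y,
\]
and Lemma \ref{singv}(1) yields $\mu_s(y_n)\le \mu_s(y)$ for all $n$ and $s>0$. Applying the bounded inequality \eqref{fm} to $a_n,b_n\in\a$ then gives the chain
\[
\mu_s(a_nb_n^*)\;\le\;\mu_s(y_n)\;\le\;\mu_s(y)
\]
for every $n$ and $s>0$.

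To finish, I would pass to the limit on the left-hand side using the lower semicontinuity of the map $x\mapsto \mu_s(x)$ with respect to convergence in measure, a consequence of \cite[Lemma 3.4]{fk}: this produces $\mu_s(ab^*)\le \liminf_n \mu_s(a_nb_n^*)\le \mu_s(y)$, which is \eqref{ysf}. The main technical hurdle is precisely this last passage to the limit; if one prefers to avoid invoking lower semicontinuity directly, an alternative is to note that $\mu_s(x)$ is right-continuous in $s$ and that Fack--Kosaki's result gives the pointwise convergence $\mu_s(a_nb_n^*)\to \mu_s(ab^*)$ at every continuity point of $s\mapsto \mu_s(ab^*)$; as this is a monotone (and thus a.e.\ continuous) function, the right-continuity then extends the inequality to all $s>0$.
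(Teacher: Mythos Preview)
Your argument is correct and follows essentially the same strategy as the paper: truncate to bounded operators, apply the Farenick--Manjegani inequality \eqref{fm}, bound the right-hand side by monotonicity, and pass to the limit via \cite[Lemma~3.4]{fk}. The only minor difference is cosmetic: the paper first reduces to positive operators using $\mu_s(ab^*)=\mu_s(|a||b|)$ and approximates $|a|,|b|$ from below, whereas you truncate $a,b$ directly via spectral projections; your use of lower semicontinuity alone is in fact slightly cleaner than the paper's two-sided squeeze.
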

\begin{proof}
Let $a=u|a|$, $b=\nu|b|$ be the polar decompositions of $a,b$. Approximating $|a|,|b|$ in measure from below with bounded operators $x_n,y_n\ge 0$, we have for each $s>0$
$$
\mu_s(x_ny_n)\le \mu_s\left(\nicefrac1p\, x_n^p+\nicefrac1q\, y_n^q\right)\le \mu_s\left(\nicefrac1p\, |a|^p+\nicefrac1q\, |b|^q\right)
$$
by (\ref{fm}) applied to the pair $x_n,y_n$ and Lemma \ref{singv}.1. Since $x_n\le |a|$, $y_n\le |b|$, it is easy to check that $|x_ny_n|\le | |a||b||$; since $|ab^*|=\nu||a||b||\nu^*$, then $\mu_s(x_ny_n)\le \mu_s(ab^*)$. Since  $x_ny_n$ converges in measure to $|a||b|$, then  by \cite[Lemma 3.4]{fk}, $\lim_n\mu_s(x_nyn)=\mu_s(ab^*)$ for each $s>0$, proving the claim.
\end{proof}

\subsection{Some restrictions and simplifications}

To make sense out of the conjecture (\ref{sigual}), we should ask for a complete description of an operator in terms of its $s$-numbers. We therefore think that it is natural to the confine the conjecture to the ideal $\ka$ of $\tau$-compact operators (Remark \ref{compa}). 

In fact, it is known that  for $ x\in \ka^+$,
$$
\sigma(x)=\mbox{clos}\{\mu_s(x):s>0\}
$$
(see \cite[Theorem 4.10]{stroh}). On the other hand, if $e,f$ are disjoint and infinite projections ($\tau(e)=\tau(f)=\infty$), taking $x=e+\frac12 f$ shows that $\sigma(x)=\{1/2,1\}$ while $\mu_s(x)=1$ for all $s>0$, therefore it is hopeless to recover $x$ from the data in $\mu_s(x)$.

\medskip

Exchanging $a$ with $b$, we can always assume that $1<p\le 2$. Since $\lambda_s=\mu_s(|ab^*|)=\mu_s(|a||b|)$, we can safely assume that $a,b\ge 0$. Moreover, we can assume (see Section \ref{difusemi}) that $\mathcal A$ is diffuse and there exist complete flags $e_t,q_t\in\pa$ ($t\ge 0$, $\tau(e_t)=\tau(q_t)=t$) such that
\begin{equation}\label{intflag}
|ab|=\int_0^{\infty} \lambda_s de(s)  \quad \textit{ and } \quad\frac1p a^p+ \frac1q b^q=\int_0^{\infty} \lambda_s dq(s),
\end{equation}
since $a,b\in\ka^+$ and $\tau$-compact operators form a (closed in measure) ideal of $\widetilde{\a}$. 

\medskip

Our arguments will be based on continuous majorization. We are therefore interested in  those operators that are locally integrable. More precisely, let $1\le p <\infty$, let $x\in \a$ and assume that there exists $\delta>0$ such that
$$
\int_0^{\delta} \mu_s(x)^p ds<\infty
$$
(hence the integral is finite for all finite $\delta>0$). We will denote the set containing all these operators  by $\loclp\subset \widetilde{\a}$. Note that in particular, all  bounded operators $a\in \a$ are of this class. Moreover,
$$
\int_0^\delta\mu_s(x)^pds\ge \mu_{\delta}(x)^{p-1}\int_0^\delta \mu_s(x)ds
$$
shows that $\loclp\subset\lo1 $ for each $p\ge 1$.

\begin{lem}\label{loco}
Let $a\in\widetilde{\a}$ and $p\ge 1$. Then $a\in \loclp$ if and only if $a\in \elep+\a$, and in that case the decomposition can be taken as follows for some $r>0$.
\begin{equation}\label{l1}
a=ap^a(r,+\infty)+ ap^a[0,r].
\end{equation}
\end{lem}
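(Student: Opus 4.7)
The plan is to prove both directions of the equivalence, with the explicit form of the decomposition (\ref{l1}) emerging naturally from the harder direction.

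For the inclusion $\elep+\a\subseteq\loclp$, suppose $a=b+c$ with $b\in\elep$ and $c\in\a$. The subadditivity in Lemma \ref{singv}.3 gives $\mu_s(a)\le \mu_{s/2}(b)+\mu_{s/2}(c)\le \mu_{s/2}(b)+\|c\|$ for every $s>0$. Combining this with the elementary inequality $(u+v)^p\le 2^{p-1}(u^p+v^p)$ and changing variables yields
$$
\int_0^\delta\mu_s(a)^p\,ds\;\le\; 2^p\int_0^{\delta/2}\mu_u(b)^p\,du+2^{p-1}\delta\|c\|^p\;<\;\infty
$$
for every $\delta>0$, since the first integral is bounded above by $\tau(|b|^p)<\infty$.

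For the converse, let $a\in\loclp$. The distribution characterization recalled in Section 2 ensures that $\tau(p^a(t,+\infty))$ is eventually finite in $t$; fix any $r>0$ for which $N:=\tau(p^a(r,+\infty))<\infty$. Since $p^a[0,r]$ and $p^a(r,+\infty)$ are complementary spectral projections of $|a|$ summing to $1$, the decomposition $a=ap^a(r,+\infty)+ap^a[0,r]$ is valid. Writing $a=u|a|$ for the polar decomposition, the piece $ap^a[0,r]=u\bigl(|a|\chi_{[0,r]}(|a|)\bigr)$ has norm at most $r$ by functional calculus, hence lies in $\a$. To prove $T:=ap^a(r,+\infty)\in\elep$, observe that $p^a(r,+\infty)$ commutes with $|a|$, so $|T|=|a|p^a(r,+\infty)$, and a direct functional-calculus computation yields
$$
p^T(t,+\infty)=p^a\bigl(\max(t,r),+\infty\bigr),\qquad t\ge 0.
$$
Feeding this into $\mu_s(T)=\inf\{t\ge 0:\tau(p^T(t,+\infty))\le s\}$ produces $\mu_s(T)=\mu_s(a)$ on $[0,N)$ and $\mu_s(T)=0$ on $[N,+\infty)$ (the latter also follows from Lemma \ref{singv}.4 applied to the projection $p^a(r,+\infty)$). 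Consequently
$$
\tau(|T|^p)=\int_0^\infty\mu_s(T)^p\,ds=\int_0^N\mu_s(a)^p\,ds<\infty
$$
by the local $p$-integrability of $a$ on the finite interval $[0,N]$, so that $T\in\elep$.

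The only non-routine step is the identification $\mu_s(T)=\mu_s(a)$ on $[0,N)$: it rests on the commutation $|T|=|a|p^a(r,+\infty)$ and the resulting spectral-projection formula $p^T(t,+\infty)=p^a(\max(t,r),+\infty)$. Once this is in hand, both directions reduce to standard manipulations with the subadditivity and distribution-function machinery for $s$-numbers already collected in Section 2.
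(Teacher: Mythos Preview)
Your argument is correct and follows the same spectral-cut decomposition as the paper. For the inclusion $\loclp\subset\elep+\a$ the two proofs are essentially identical: the paper packages the computation of $\mu_s(ap^a(r,+\infty))$ as a citation to \cite[Proposition 1.2]{hiaimayo}, whereas you work it out directly via the spectral-projection formula $p^T(t,+\infty)=p^a(\max(t,r),+\infty)$.

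For the reverse inclusion $\elep+\a\subset\loclp$ there is a small but genuine difference worth noting. The paper invokes the majorization lemma \cite[Lemma 4.4.iii]{fk} for the convex function $f(x)=x^p$ to obtain
\[
\int_0^t\mu_s(l+m)^p\,ds\le\int_0^t\bigl(\mu_s(l)+\mu_s(m)\bigr)^p\,ds,
\]
and then separates the right-hand side with Minkowski. Your route is more elementary: you use only the crude subadditivity $\mu_s(a)\le\mu_{s/2}(b)+\mu_{s/2}(c)$ from Lemma~\ref{singv}.3 together with the scalar bound $(u+v)^p\le 2^{p-1}(u^p+v^p)$. This costs an irrelevant constant $2^p$ but avoids the non-trivial Hardy--Littlewood--P\'olya type inequality behind \cite[Lemma 4.4]{fk}, making your argument entirely self-contained within the tools already assembled in Section~2.
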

\begin{proof}
By polar decomposition, it suffices to consider $a\ge 0$. Note that $ap^a[0,r]\le r\in\a$ and since $a\in\a$, eventually $\tau(p^ a(r,+\infty))<\infty$ for some $r>0$. Likewise, for $p>1$,
$$
a^p=a^p p^a(r,+\infty)+ a^p p^a[0,r].
$$
These expressions imply the following (see \cite[Proposition 1.2]{hiaimayo}):
$$
a^p\in \lo1 \Leftrightarrow a^p\in \l1+\a \Leftrightarrow \mbox{ there exists }r>0 \mbox{ such that }a^p p^a(r,+\infty)\in \l1.
$$
Note that then $a p^a(r,+\infty)\in \elep$ for the same $r$, therefore $a\in \elep+\a$ by (\ref{l1}). On the other hand, if $a=l+m\in \elep+\a$, then taking $f(x)=x^p$ which is continuous, convex and increasing in $[0,+\infty)$,
$$
\int_0^t \mu_s(a)^pds=\int_0^t\mu_s(l+m)^pds\le \int_0^t (\mu_s(l)+\mu_s(m))^p ds
$$
by \cite[Lemma 4.4.iii]{fk}. Therefore for any $t>0$
\begin{eqnarray}
\left(\int_0^t \mu_s(a)^pds\right)^{1/p}&\le& \left( \int_0^t (\mu_s(l)+\mu_s(m))^p ds\right)^{1/p}\nonumber\\
&\le& \left(\int_0^t \mu_s(l)^pds\right)^{1/p}+ \left(\int_0^t \mu_s(m)^pds\right)^{1/p}\nonumber\\
&\le &\left(\int_0^t \mu_s(l)^pds\right)^{1/p} +\|m\|t^{1/p}<\infty\nonumber
\end{eqnarray}
by the classical Mikowkski inequality, therefore $a\in \loclp$. Take $r=\mu_t(a)$, and note that for all $s>0$,
$$
\mu_s(ap^a(r,+\infty))=\left\{\begin{array}{ll} \mu_s(a) & 0<s<t\\
0 & s\ge t
\end{array}\right.,
$$
therefore  (\ref{l1}) gives the stated decomposition.
\end{proof}

\section{Main results}

We start by examining the ranges of $a,b$. Throughout, $p,q$ are positive with  $1/p+1/q=1$.

\begin{prop}\label{pnoes2conti}
Let $0\le a,b\in\ka$ with $ab\in \loc2$. If $p\ne 2$  and
$$
\mu_s(ab)=\mu_s\left(\frac1p a^p +\frac1q b^q\right)\quad \textit{ for all }s>0
$$
then $\overline{\ran(a)}=\overline{\ran(b)}$.
\end{prop}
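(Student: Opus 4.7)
The plan is to deduce $\overline{\ran(a)} = \overline{\ran(b)}$ by comparing the support projections of $|ab|$ and $c := \frac{1}{p}a^p + \frac{1}{q}b^q$. Writing $e := s(a)$ and $f := s(b)$ for the support projections, since $a^p, b^q \ge 0$ the equation $\langle cv,v\rangle = 0$ forces $\|a^{p/2}v\|^2 = \|b^{q/2}v\|^2 = 0$, so $\ker c = \ker a \cap \ker b$ and hence $s(c) = e \vee f$. On the other hand, the inclusion $\ker b \subseteq \ker(ab)$ gives $s(|ab|) = \ker(ab)^\perp \le f$; by the polar decomposition $ab = u|ab|$, the left support projection $\overline{\ran(ab)}$ lies below $e$ and is Murray--von Neumann equivalent to $s(|ab|)$ via $u$, so both projections carry the same $\tau$-trace in the extended sense.

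From the hypothesis $\mu_s(ab) = \mu_s(c)$ for all $s > 0$, the distribution functions coincide, i.e., $\tau(p^{|ab|}(t,+\infty)) = \tau(p^c(t,+\infty))$ for every $t > 0$; letting $t \to 0^+$ we obtain $\tau(s(|ab|)) = \tau(s(c)) = \tau(e \vee f)$. When $\tau(e \vee f) < +\infty$, the chain $s(|ab|) \le f \le e \vee f$ with equal $\tau$-trace, combined with the faithfulness of $\tau$, forces $f = e \vee f$, hence $e \le f$; the symmetric argument applied to the left support of $ab$ gives $f \le e$, concluding $e = f$.

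The main technical obstacle is the semifinite regime where $\tau(e \vee f) = +\infty$, for then the trace comparison alone is inconclusive: two distinct projections of infinite trace cannot be distinguished by $\tau$. Here I would invoke the hypothesis $ab \in \loc2$ and Lemma \ref{loco} to truncate by the finite-trace spectral projections $r_\alpha := p^{|ab|}(\alpha,+\infty) \in \ka$, reducing the comparison to finite-trace corners of $\a$ where the preceding argument applies. The hypothesis $p \ne 2$ is expected to enter at this stage through a sharp Young-type trace inequality on the truncations which is strict for $p \ne q$ unless the truncated supports of $a$ and $b$ coincide; letting $\alpha \to 0^+$ then propagates the local range equalities to the global one. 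The case $p = q = 2$ is excluded because the symmetric AM--GM inequality admits additional equality configurations that this reduction does not eliminate.
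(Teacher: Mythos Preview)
Your support-projection argument in the first two paragraphs is clean and correct when $\tau(e\vee f)<\infty$: from $\mu_s(ab)=\mu_s(c)$ one indeed gets $\tau(s(|ab|))=\tau(e\vee f)$, and then the chain $s(|ab|)\le f\le e\vee f$ together with the equivalent left support below $e$ forces $e=f$ by faithfulness. Notice, however, that this step uses neither $p\ne 2$ nor $ab\in\loc2$; that alone should warn you that the real content of the proposition lies entirely in the case $\tau(e\vee f)=+\infty$, which is the generic situation for $\tau$-compact operators in a genuinely semifinite algebra.

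The third paragraph is where the gap lies. Compressing by $r_\alpha=p^{|ab|}(\alpha,+\infty)$ does not produce a pair $\tilde a,\tilde b$ for which the hypothesis $\mu_s(\tilde a\tilde b)=\mu_s(\tfrac1p\tilde a^p+\tfrac1q\tilde b^q)$ survives: $r_\alpha c\, r_\alpha$ is not of the form $\tfrac1p\tilde a^p+\tfrac1q\tilde b^q$, so your ``preceding argument'' has nothing to act on. The appeal to an unspecified ``sharp Young-type trace inequality on the truncations'' is not an argument, and in fact no such reduction to finite-trace corners is available here.

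The paper's mechanism is quite different and explains exactly where $p\ne 2$ and $ab\in\loc2$ enter. One perturbs $b$ to $b_\varepsilon=b+\varepsilon(1-p_b)$, so that $|b_\varepsilon a|^2=|ba|^2+\varepsilon^2 a(1-p_b)a$, and applies Young's $s$-number inequality to $(a,b_\varepsilon)$ to get $\mu_s(b_\varepsilon a)\le \lambda_s+\tfrac1q\varepsilon^q$. Integrating against a complete flag for $|ba|$ on $[0,t]$ (this is where $ab\in\loc2$ makes $\int_0^t\lambda_s^2\,ds$ finite) and comparing the $\varepsilon^2$ term against the $\varepsilon^q$ and $\varepsilon^{2q}$ error terms forces, after dividing by $\varepsilon^2$ and letting $\varepsilon\to 0$, the vanishing of $\tau(e_t a(1-p_b)ae_t)$. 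The key inequality $q>2$ (equivalently $p<2$, after the harmless exchange of $a,b$) is precisely what makes those error terms $o(\varepsilon^2)$; this is where $p\ne 2$ is genuinely used. From $(1-p_b)ae_t=0$ one then iterates to show $\overline{\ran b}$ is $a$-invariant, decomposes $a=a_b+a_\perp$, and kills $a_\perp$ via Lemma~\ref{singv}.6 after a second application of the $s$-number Young inequality to $(a_b,b)$. None of this is captured by a spectral truncation of $|ab|$.
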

\begin{proof}
Exchanging $a,b$ it will suffice to consider $1<p<2$. Let $p_b$ be the projection onto the closure of the range of $b$. Let $\be=b+\varepsilon(1-p_b)$, then $\be^q=b^q+\varepsilon^q (1-p_b)$ and $\be^2=b^2+\varepsilon^2(1-p_b)$. Fix $t>0$, let $\{e_s\}$ be a complete flag for $|ba|$, then denoting $\lambda_s=\mu_s(ab)$ we have 
$$
\int_0^t\lambda_s^2 de(s)+\varepsilon^2 e_ta(1-p_b)ae_t=e_t|ba|^2e_t+\varepsilon^2 e_ta(1-p_b)ae_t=e_t|\be a|^2 e_t.
$$
Taking the trace, it follows that
$$
\int_0^t\lambda_s^2 ds+\varepsilon^2 \tau(e_ta(1-p_b)ae_t)=\tau(e_t|\be a|^2)\le \int_0^t\mu_s(|\be a|)^2
$$
by Remark \ref{intemu}. On the other hand, by (\ref{ysf}) applied to $a,b_{\varepsilon}$,
\begin{eqnarray}
\mu_s(|\be a|)&\le & \mu_s\left(\frac1p a^p+\frac1q \be^q \right)=\mu_s\left(\frac1p a^p +\frac1q b^q+ \frac1q \varepsilon^q(1-p_b) \right)\nonumber\\
&\le & \mu_s\left(\frac1p a^p+\frac1q b^q \right)+ \frac1q\varepsilon^q= \mu_s(ab)+\frac1q\varepsilon^q=\lambda_s+ \frac1q\varepsilon^q\nonumber.
\end{eqnarray}
Note that in particular, all the integrals computed up to now are finite by the hypothesis on $ab$, and
$$
\int_0^t\lambda_s^2 ds+\varepsilon^2 \tau(e_ta(1-p_b)ae_t)\le \int_0^t\lambda_s^2 ds+\frac{1}{q^2}t\varepsilon^{2q}+\frac2q \varepsilon^q\int_0^t\lambda_s ds.
$$
Cancelling $\int_0^t\lambda_s^2ds$ and dividing by $\varepsilon^2$, noting that $q>2$ and letting $\varepsilon\to 0$ gives us that $\tau(e_ta(1-p_b)ae_t)=0$. Since the trace is faithful, we conclude that $(1-p_b)ae_t=0$ or equivalently, $ae_t=p_bae_t$ for all $t>0$. Then 
$$
p_b a|ba|=p_ba\int_0^{\infty}\lambda_t de(t)=\int_0^{\infty}\lambda_t p_ba de(t)=\int_0^{\infty}\lambda_t a de(t)=a|ba|, 
$$
that is $a(\ran|ba|)\subset \overline{\ran(b)}$.

Now if $\xi\in \mathcal H$, then $a|ba|\xi\in\overline{\ran(b)}$, therefore $a^2|ba|\xi=a(a|ba|\xi)\in a\overline{\ran(b)}\subset \overline{\ran(ab)}=\overline{\ran|ba|}$, and $a^3|ba|\xi=a(a^2|ba|\xi)\in a \overline{\ran|ba|}\subset \overline{\ran(b)}$.  Iterating this argument, we arrive to the conclusion that $a^{2n+1}(\ran|ba|)\subset\overline{\ran(b)}$ for all $n\in\mathbb N_0$. Using an approximation of $f=\chi_{\sigma(a)}$ by odd functions, we conclude that $p_a(\ran|ba|)=f(a)(\ran|ba|)\subset\overline{\ran(b)}$ where $p_a$ is the projection onto the closure of the range of $a$. Therefore $|ba|^2\xi=ab^2a\xi=p_aab^2a\xi=p_a|ba|^2\xi\subset \overline{\ran(b)}$,  which gives $\overline{\ran|ba|}=\overline{\ran(|ba|^2)}\subset \overline{\ran(b)}$. But then 
$$
a\overline{\ran(b)}=\overline{\ran(ab)}=\overline{\ran|ba|}\subset \overline{\ran(b)}
$$
which proves that the range of $b$ is invariant for $a$; since $a\ge 0$ the same is true for the kernel of $b$. Therefore we can write  $a=a_b+a_\perp$, with $a_b=p_bap_b\ge 0$ and $a_\perp=(1-p_b)a(1-p_b)\ge 0$. Note that $ba^2b=ba_b^2b$ and $a^p=a_b^p+a_{\perp}^p$, thus for all $s>0$,
\begin{eqnarray}
\mu_s\left(\frac1p a_b^p+\frac1q b^q \right)&\le & \mu_s\left(\frac1p a_b^p+\frac1q b^q +\frac1p a_{\perp}^p\right)=\mu_s\left(\frac1p a^p+\frac1q b^q \right)=\mu_s(ab)\nonumber\\
&=& \mu_s(a_bb)\le \mu_s\left(\frac1p a_b^p+\frac1q b^q \right)\nonumber
\end{eqnarray}
by the hypothesis and (\ref{ysf}) applied to $a_b,b$. This proves that for all $s>0$
$$
\mu_s\left(\frac1p a_b^p+\frac1q b^q \right)=\mu_s\left(\frac1p a_b^p+\frac1q b^q+ \frac1p a_{\perp}^p \right)
$$
which (by Lemma \ref{singv}.5) is only possible if $a_{\perp}=0$, proving the assertion of the proposition.
\end{proof}

\medskip

The following will be used twice throughout the proof of the main theorem, therefore we preferred to state it as a separate lemma:

\begin{lem}\label{nojodan}
Let $0\le x\in \lo1$  and $p\in\a$ a projection with finite trace. Then
$$
\tau(px)=\int_0^{\tau(p)}\mu_s(x)ds
$$
implies $xp=px$.
\end{lem}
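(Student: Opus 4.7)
My plan is to show that $p$ must be sandwiched between two spectral projections of $x$ attached to its $t_0$-th singular value (where $t_0:=\tau(p)$), and then deduce commutation directly. First I set $\lambda_0:=\mu_{t_0}(x)$, finite because $x\in\lo1$, and introduce the decomposition
\[
1=e^++e^0+e^-,\qquad e^+:=p^x(\lambda_0,+\infty),\ \ e^0:=\chi_{\{\lambda_0\}}(x),\ \ e^-:=p^x[0,\lambda_0).
\]
The distribution-characterization of $\mu_s$ recalled in Section~2 yields $\tau(e^+)\le t_0\le\tau(e^++e^0)$. I will work with the positive operators $w_+:=(x-\lambda_0)e^+\in\l1$ and $w_-:=(\lambda_0-x)e^-\in\a$ (bounded by $\lambda_0$), whose closed ranges are $e^+\h$ and $e^-\h$ respectively.

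The first step is to express both sides of the hypothesis in terms of $w_\pm$. Using $x-\lambda_0=w_+-w_-$ one obtains $\tau(px)=\lambda_0 t_0+\tau(pw_+)-\tau(pw_-)$, and a direct calculation (using that $\mu_s(x)=\lambda_0$ on $[\tau(e^+),\tau(e^++e^0))\ni t_0$) gives $\int_0^{t_0}\mu_s(x)\,ds=\lambda_0 t_0+\tau(w_+)$. Subtracting, the hypothesis collapses into
\[
\tau((1-p)w_+)+\tau(pw_-)=0.
\]
Both summands are traces of positive operators, so faithfulness of $\tau$ forces each to vanish separately.

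Next I will exploit the identity $\tau(qz)=\tau((z^{1/2}q^{1/2})^*(z^{1/2}q^{1/2}))$ (valid for a projection $q\in\l1$ and a positive $z$ with $qz\in\l1$), which together with faithfulness gives $z^{1/2}q^{1/2}=0$, i.e.\ $q$ annihilates $\overline{\ran(z)}$. Applied to the two vanishing traces: with $q=1-p,\ z=w_+$ I obtain $(1-p)e^+=0$, so $p\ge e^+$; with $q=p,\ z=w_-$ I obtain $pe^-=0$, so $p\le e^++e^0$. Hence $e^+\le p\le e^++e^0$.

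Finally, writing $p=e^++r$ with $r:=p-e^+\le e^0$ a projection, the projection $e^+$ commutes with $x$ as a spectral projection, while $r\le e^0=\chi_{\{\lambda_0\}}(x)$ forces $xr=\lambda_0 r=rx$. Therefore $p$ commutes with $x$, giving $xp=px$. The main point requiring care is the bookkeeping of finite traces justifying the arithmetic of the rearrangement; this follows directly from $x\in\lo1$ together with $\tau(p)<\infty$ and $\tau(e^+)\le t_0$, after which the conclusion is an immediate consequence of trace faithfulness.
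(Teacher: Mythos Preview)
Your argument is correct and takes a genuinely different route from the paper's proof. The paper argues softly: from $(pxp)^2\le px^2p$ it gets $pxp\le|xp|$, observes $\tau(|xp|)\le\int_0^{\tau(p)}\mu_s(x)\,ds=\tau(pxp)$, so the positive operator $|xp|-pxp$ has zero trace and must vanish; then $pxpxp=px^2p$ is rewritten as $\langle px,xp\rangle_2=\|px\|_2\|xp\|_2$, and equality in Cauchy--Schwarz forces $xp=px$. Your approach is instead a direct spectral analysis: you locate $p$ between the spectral projections $e^+=p^x(\lambda_0,\infty)$ and $e^++e^0$ by splitting the hypothesis into two vanishing positive traces, and then commutation is immediate. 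Your method yields the stronger structural conclusion $e^+\le p\le e^++e^0$ (so $p$ is essentially a top spectral projection of $x$), which the paper's Cauchy--Schwarz argument does not make explicit; on the other hand the paper's proof is shorter and avoids any computation of $\int_0^{t_0}\mu_s(x)\,ds$ in terms of spectral data. One small wording issue: when you invoke $\tau(qz)=\tau\big((z^{1/2}q^{1/2})^*(z^{1/2}q^{1/2})\big)$ with $q=1-p$, the projection $q$ need not lie in $\l1$; the identity is still valid because $z=w_+\in\l1$, so you should state the hypothesis as $z\in\l1$ (or $qz\in\l1$) rather than $q\in\l1$.
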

\begin{proof}
Since $p$ is a projection and $x\ge 0$,
$$
(pxp)^2=pxpxp\le px^2p=|xp|^2.
$$
Since the square root is operator monotone, $pxp\le |xp|$.  Take the trace and invoke items 4 and 5 of Lemma \ref{singv}, then
$$
\tau(|xp|)=\int_0^{\infty} \mu_s(xp)ds=\int_0^{\tau(p)}\mu_s(xp)ds\le \int_0^{\tau(p)}\mu_s(x)ds,
$$
thus by the hypothesis $pxp$ and $|xp|$ have equal (and finite) trace. Since the trace is faithful this is only possible if $pxp=|xp|$, or equivalently if $pxpxp=px^2p$. This implies that 
$$
\langle px,xp\rangle_2=\tau(pxpx)=\tau(px^2p)=\|px\|_2^ 2=\|px\|_2\|xp\|_2,
$$
which by the case of equality in Cauchy-Schwarz inequality implies $xp=px$.
\end{proof}
\medskip

We will also need the following classical result on operator ranges for  \cite[Theorem 2]{doug}.

\begin{rem} \textbf{(Douglas' Lemma).}\label{dlema}
Let $x,y\in \widetilde{\a}$. If $xx^*\le \lambda yy^*$ for some $\lambda\ge 0$, there exists a contraction $c$ such that $x=yc$, therefore  $\ran(x)\subset \ran(y)$.
\end{rem}

With this tools at hand, we are now able to prove the main theorem.

\begin{teo}\label{sii}
Let $0\le a,b\in \ka$ with  $ab\in \loc2$. If
$$
\mu_s(ab)=\mu_s\left(\frac1p a^p +\frac1q b^q\right)\quad \textit{ for all }s>0,
$$
then $a^p=b^q$. When $p=q=2$ it suffices to assume $ab\in \lo1$.
\end{teo}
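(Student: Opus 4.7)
My plan is to reduce the operator identity to the commutative equality case of Young's inequality, by establishing that $\frac{1}{p}a^p+\frac{1}{q}b^q = |ba|$ as operators inside an abelian von Neumann subalgebra. Set $x = \frac{1}{p}a^p + \frac{1}{q}b^q$ and $\lambda_s = \mu_s(|ba|) = \mu_s(x)$. When $p\ne 2$, Proposition \ref{pnoes2conti} allows me to assume $\overline{\ran(a)} = \overline{\ran(b)}$; the case $p=q=2$ with the weaker hypothesis $ab\in\lo1$ will slot into the same framework.

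Let $\{e_t\}_{t\ge 0}$ be a complete flag for $|ba|$, so $|ba| = \int_0^{\infty}\lambda_s\, de(s)$ and $\tau(|ba|\,e_t) = \int_0^t\lambda_s\,ds$. By Remark \ref{intemu} and the hypothesis,
$$\tau(x\,e_t)\le \int_0^t\mu_s(x)\,ds = \int_0^t\lambda_s\,ds.$$
The heart of the proof is the reverse inequality $\tau(x\,e_t)\ge \int_0^t\lambda_s\,ds$. This cannot come from a pointwise operator bound $|ba|\le x$, which already fails on $2\times 2$ positive matrices, so the argument must exploit the sharp equality of \emph{all} $s$-numbers, not merely the Young bound (\ref{ysf}). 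The natural route --- mirroring the strategy of Proposition \ref{pnoes2conti} --- is a perturbation: for small $\varepsilon>0$ perturb $b$ (or $a$) in a direction adapted to $e_t$, apply (\ref{ysf}) to the perturbed pair, expand trace quantities against $e_t$ via Remark \ref{intemu}, and take $\varepsilon\to 0$. Douglas' lemma (Remark \ref{dlema}) should handle the ranges of the auxiliary operators that appear in the expansion.

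Once this trace equality is established, Lemma \ref{nojodan} yields $[x,e_t]=0$ for every $t>0$, so both $|ba|$ and $x$ lie in the abelian von Neumann subalgebra $\mathcal{B}$ generated by the flag. Identifying $\mathcal{B}$ with $L^{\infty}([0,\infty),ds)$ via $e_t\leftrightarrow \chi_{[0,t]}$, and using that the integrals $\tau(y\,e_t)$ agree for $y=|ba|$ and $y=x$, both correspond to the same function $s\mapsto\lambda_s$, hence $x=|ba|$ as operators. Squaring gives $\bigl(\frac{1}{p}a^p+\frac{1}{q}b^q\bigr)^2 = ab^2a$. For $p=q=2$, expanding and invoking cyclicity of $\tau$ against $e_t$ produces $\tau((a^2-b^2)^2\,e_t)=0$ for every $t$, hence $a^2=b^2$. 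For general $p,q$, the same argument applied to $ab$ also gives $|ab|=x$, so $ab^2a = ba^2b$, i.e.\ $ab$ is normal; combined with $|ab|\in\mathcal{B}$ this forces $ab$ to be self-adjoint (so $ab=ba$), placing $a,b$ in a common abelian subalgebra, and the scalar Young equality case applied by functional calculus produces $a^p=b^q$.

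By a clear margin the main obstacle is the reverse trace inequality $\tau(x\,e_t)\ge \int_0^t\lambda_s\,ds$: everything downstream is comparatively routine once this equality is secured.
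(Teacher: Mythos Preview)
Your diagnosis is correct that the equality $\tau(x\,e_t)=\int_0^t\lambda_s\,ds$ is the crux, but the perturbation sketch does not close. In Proposition \ref{pnoes2conti} the perturbation $b_\varepsilon=b+\varepsilon(1-p_b)$ succeeds only because $(1-p_b)$ commutes with $b$, so $b_\varepsilon^q$ splits cleanly and the $\varepsilon$-expansion is tractable. A ``direction adapted to $e_t$'' has no reason to commute with either $a$ or $b$ (the flag $\{e_t\}$ belongs to $|ba|$, not to $a$ or $b$ separately), and without such commutation the $q$-th power of the perturbed operator cannot be expanded. The paper avoids this entirely by constructing a \emph{second} flag $\{f_t\}$: after reducing to $b$ injective, one sets $f_I$ equal to the projection onto $\overline{b^{-1}\ran(e_I)}$ and $c_I=``b^{-1}e_Ib^{-1}"$. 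From $ba^2b\ge\lambda_t^2e_I$ one gets $0\le c_I\le\lambda_t^{-2}a^2$; refining partitions gives $\sum_i\lambda_{s_{i+1}}^2c_i\to a^2$ strongly, so $a$ commutes with every $f_t$. A commutative Young inequality in the algebra generated by $c_i,f_i$ (using operator monotonicity of $t^{p/2}$ and convexity of $t^{q/2}$) then yields $\lambda_{s_{i+1}}f_i\le f_iDf_i$; summing produces $\tau(Df_t)=\int_0^t\lambda_s\,ds$, and Lemma \ref{nojodan} applies to the pair $(D,f_t)$, not $(D,e_t)$. Only after establishing $[D,f_t]=0$, hence $[b,f_t]=0$, does the paper deduce $f_t=e_t$.

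Your downstream steps also have gaps. From $x=|ba|$ alone you cannot obtain $\tau((a^2-b^2)^2e_t)=0$: the identity $|ba|^2=ab^2a$ separates $a$ and $b$, and you have not shown $[a,e_t]=0$ or $[b,e_t]=0$, so cyclicity does not reduce $\tau(ab^2a\,e_t)$ to $\tau(a^2b^2e_t)$. (The paper handles $p=q=2$ by a different reduction: replace $a$ with $\tilde a=(p_ba^2p_b)^{1/2}$, run the $f_t$-machinery to get $p_ba^2p_b=b^2$ and symmetrically $p_ab^2p_a=a^2$, then apply Lemma \ref{nojodan} to $a^2$ and the flag of $b$.) For general $p$, ``$ab$ normal and $|ab|\in\mathcal B$'' does not force $ab$ self-adjoint: a normal operator can have its modulus in an abelian algebra without being self-adjoint. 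The paper instead gets $[a^p,f_t]=0$ first, reads off $[b^q,f_t]=0$ from $b^q=qD-\tfrac{q}{p}a^p$, whence $[a,b]=0$; only then does $ab=|ab|=D$ hold, and the conclusion follows by truncating with $f_t$ to finite trace and invoking the equality case of the operator H\"older inequality.
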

\begin{proof}
Exchanging $a,b$ it will suffice to consider $1<p\le 2$. Denoting $\lambda_s=\mu_s(ab)$ we write $ba^2b=|ab|^2=\int_0^{\infty}\lambda_s^2 de(s)$ with a complete flag $\{e_s\}_{s\ge 0}\subset\pa$. For $I=[s,t]\subset [0,+\infty)$ denote $e_I=e_s-e_t=e(s,t)$, then $e_t=e_{[0,t]}$. Since $\lambda_s$ is non-increasing,
\begin{equation}\label{ei}
ba^2b= ba(ba)^*=|ab|^2=\int_0^\infty \lambda_s^2de(s)\ge \int_I \lambda_s^2 de(s)\ge \lambda_t^2 \int_s^t de(s)=\lambda_t^2 e_I,
\end{equation}
and the previous lemma ensures that $\ran(e_I)\subset\ran(|ab|)=\ran(ba)\subset \ran(b)$ for each interval $I=[s,t]$.  Moreover,  if $e=\bigvee_{s\ge 0}e_s$ is the join of the increasing projections,  clearly $e =p_{|ab|}$, the projection onto the closure of the range of $|ab|$. 

We now treat three cases separately.

\medskip

\textbf{Case $4/3\le p<2$.} By Proposition \ref{pnoes2conti} we can consider $\mathcal H=\overline{\ran(a)}=\overline{\ran(b)}$, and the semi-finite von Neumann subalgebra ${\mathcal M}\subset \a$ generated by the (finitely supported) spectral projections of $a,b,ab$. We give $\mathcal M$ the inherited trace $\tau$ and identity $1=1_{\mathcal M}=P_{\mathcal H}=p_b$. All the operators involved $a,b,ab,a^p,b^q$  are in $\widetilde{\mathcal M}$, and $\mathcal M$ can be faithfully represented in this $\bh$. Then we can safely assume that $b$ is injective, $e,e_I\in \mathcal M$ for each interval $I$, and  $\mathcal M\subset \mathcal H$ is a common core for all $x\in\widetilde{\mathcal M}$.

We remark that in what follows, \textit{we will only use that $b$ is injective}, or equivalently, that the range of $b$ is dense.

Again for each interval $I$, let $\mathcal H_I$ be the closure of $b^{-1}\ran(e_I)\subset \mathcal H$.  Let $f_I=P_{\mathcal H_I}$ and $f=\bigvee_{s\ge 0} f_s$ the closed join of all projections, where $f_s=f_{[0,s]}$. We divide the proof in several smaller claims.

\smallskip

Claim: $f=p_a$, the projection onto the closure of the range of $a$. Let $\eta\in\ran(f_s)$; then $\eta=\lim_n\eta_n$ with $b\eta_n=e_s\xi_n$; since $\ran(e_s)\subset\ran(|ab|)=\ran(ba)$, it must be $b\eta_n=ba\psi_n$ for some $\psi_n\in\mathcal H$, and by the injectivity of $b$, we obtain $\eta_n\in \ran(a)$, therefore $\eta\in\overline{\ran(a)}$. This proves that $f\le p_a$. On the other hand, if $\eta\in \ran(a)$, then $b\eta\in\ran(ba)=\ran|ab|\subset \ran(e)$, therefore $b\eta=\lim \xi_n$ with $\xi_n\in \ran(e_{s_n})$ for some $s_n>0$. Therefore $\xi_n=b\eta_n$ with $\eta_n\in\ran(f_{s_n})\subset \ran(f)$ for each $n$. Now
$$
|\langle \eta_n -\eta_m,b\xi\rangle| =|\langle b\eta_n -b\eta_m,\xi\rangle|=|\langle \xi_n-\xi_m,\xi\rangle|\le \|\xi_n-\xi_m\|\|\xi\|
$$
and since the range of $b$ is dense, $\{\eta_n\}_n$ is a weak Cauchy sequence in $\ran(f)$ which, being closed and linear, it is weakly closed. Therefore $\eta_n$ converges weakly to some $\eta_0\in \ran(f)$. But for each $\xi\in\mathcal H$, 
$$
\langle b\eta, \xi\rangle =\lim_n \langle b\eta_n,\xi\rangle=\lim_n\langle \eta_n,b\xi\rangle =\langle \eta_0,b\xi\rangle =\langle b\eta_0,\xi\rangle,
$$
which implies that $b\eta=b\eta_0$, and by the injectivity of $b$, we obtain $\eta=\eta_0\in \ran(f)$, thus $p_a\le f$. This proves that $f=p_a$.

\smallskip

Claim: there exists a closed operator $c_I="b^{-1}e_Ib^{-1}"$ defined on $\ran(b)$ such that $0\le c_I\le  \frac{1}{\lambda_t^2}a^2$. Since $b$ is injective and $\ran(e_I)\subset \ran(b)$, for each $\xi\in \mathcal H$ there exists a unique $\eta\in\mathcal H_I$ such that $b\eta=e_I\xi$. Define $c_I$ in $\ran(b)$ as follows: $c_Ib\xi=\eta$. We now compute
$$
\langle c_I b\xi_1,b\xi_2 \rangle = \langle \eta_1,b\xi_2\rangle =\langle b\eta_1,\xi_2\rangle =\langle e_I\xi_1, \xi_2\rangle
$$
which shows that $c_I$ is a symmetric operator on $\ran(b)$. Moreover, since $e_I\le \frac{1}{\lambda_t^2}ba^2b$ (recall $I=[s,t]$ and equation (\ref{ei})), it follows that 
$$
\langle c_I b\xi,b\xi \rangle = \langle \eta,b\xi\rangle =\langle e_I\xi, \xi\rangle\le \frac{1}{\lambda_t^2}\langle ba^2b\xi,\xi\rangle = \frac{1}{\lambda_t^2}\langle a^2 b\xi,b\xi\rangle,
$$
therefore $c_I$ has a self-adjoint extension (c.f. \cite[Theorem 5.1.13]{ped}, that we still denote $c_I$), and $0\le c_I\le  \frac{1}{\lambda_t^2}a^2$.

Claim: $c_I\in\widetilde{\mathcal M}$. Let $u\in \mathcal M'$, let $\xi\in\mathcal H$. Then there exists unique  $\eta,\psi\in\mathcal H$ such that $e_I\xi=b\psi$ and $e_I(u\xi)=b\eta$. Now $e_Iu\xi=ue_I\xi$ since $u\in\mathcal M'$, therefore 
$$
b\eta=e_I u\xi=u e_I\xi=u b\psi=b u\psi,
$$
and since $b$ is injective, $u\psi=\eta$. We now compute
$$
c_I u(b\xi)=c_I b(u\xi)=\eta=u\psi=u(c_Ib\xi)=uc_I(b\xi),
$$
which shows that $c_Iu=uc_I$, proving that $c_I\in\widetilde{\mathcal M}$.

\smallskip

Claim: $bc_Ib=e_I$, $e_Ibf_I=bf_I$ and $f_Ib^2f_Ic_I=f_I$ for each $I$. From the very definition, $bc_Ib=e_I$. On the other hand note that if $\eta\in f_I$ then $\eta=\lim_n\eta_n$ with $b\eta_n\in \ran(e_I)$ and for any $\xi\in\mathcal H$
$$
|\langle b\eta_n -b\eta,\xi\rangle|=|\langle \eta_n-\eta,b\xi|\le\|\eta_n-\eta\|\|b\xi\|
$$
therefore $b\eta_n\in \ran(e_I)$ converges weakly to $b\eta$, therefore $b\eta\in \ran(e_I)$ and we obtain $e_Ibf_I=bf_I$. Taking adjoints, $f_Ib=f_Ibe_I$, hence for each $\xi\in\mathcal H$, 
$$
f_Ib^2f_Ic_I(b\xi)=f_Ib^2f_I\eta=f_Ib^2\eta=(f_Ib)(b\eta)=(f_Ib)(e_I\xi)=f_I(b\xi), 
$$
which proves that $f_Ib^2f_Ic_I=f_I$ for any $I$.

\smallskip

Claim: $f_Ic_J=c_Jf_I$ for any $I,J$. Since when $e_I\xi=b\eta$, then $\eta\in\ran(f_I)$, clearly $f_Ic_I =c_I=c_If_I$. Moreover it is not hard to see that $f_Ic_J=c_{I\cap J}$ for any pair of intervals $I,J$ by the injectivity of $b$, therefore $f_Ic_J=c_Jf_I$. 

\smallskip

Claim: $f_I\sim e_I$. Inspection of the ranges shows that (again by the injectivity of $b$)
$$
\ran(e_I)=\overline{\ran(bc_I)}\textit{ and }\ran(f_I)=\overline{\ran(c_Ib)},
$$
and since $c_Ib=(bc_I)^*$, it follows that $e_I$ is von Neumann equivalent to $f_I$, which implies that $f,f_I\in\mathcal M$ and moreover $\tau(f_I)=\tau(e_I)$ for each $I$. 

\smallskip

Summing up our findings:  for any interval $I=[s,t]\subset [0,+\infty)$, we have $e,e_I,f,f_I\in \mathcal P(\mathcal M)$ with $f_I\simeq e_I$, $\tau(f_I)=	\tau(e_I)=t-s$, $e_Ibf_I=bf_I$, $f_Ib=f_Ibe_I$. Moreover $c_I\in\widetilde{\mathcal M}$,  $bc_Ib=e_I$,
\begin{equation}\label{findings}
f_Ic_J=c_{I\cap J}=c_Jf_I,\quad  f_Ib^2f_I c_I=f_I=c_If_Ib^2f_I,\quad  a^2\ge \lambda_t^2 c_I\, \mbox{  and  } f_Ia^2f_I\ge \lambda_t^2 c_I.
\end{equation}

Claim: $a^pf_s=f_sa^p$ for all $s>0$. Let $\pi=\{I_i\}_{i=1\cdots n}$ with $I_i=[s_i,s_{i+1}]$ be a partition of $[0,+\infty)$, and denote $e_i=e_{I_i}$ and likewise with $f_i,c_i$. We have
$$
ba^2b\ge \sum_i\int_{I_i}\lambda_s^2 de(s)\ge \sum_i  \lambda_{s_{i+1}}^2 e_i= \sum_i  \lambda_{s_{i+1}}^2bc_ib
$$
which implies $a^2\ge \sum_i  \lambda_{s_{i+1}}^2c_i$ since $b$ is injective with dense range. Now refining the partition
\begin{eqnarray}
\langle a^2b\xi,b\xi\rangle & = & \langle ba^2 b\xi,\xi\rangle = \langle \int_0^{+\infty}\lambda_s^2 de(s)\xi,\xi\rangle=\lim\limits_{|\pi|\to 0} \langle \sum_i\lambda_{s_{i+1}}^2 e_i \xi ,\xi\rangle\nonumber\\
&=& \lim\limits_{|\pi|\to 0} \langle \sum_i\lambda_{s_{i+1}}^2c_i \, b\xi ,b\xi\rangle  \nonumber
\end{eqnarray}
for any $\xi \in\mathcal H$. Since the range of $b$ is dense and the operators involved are positive, we conclude that $\lim\limits_{|\pi|\to 0} \sum_i\lambda_{s_{i+1}}^2 c_i =a^2$ in the strong operator topology. Since $f_sc_i=c_if_s(=c_{[0,s]\cap I_i})$, we conclude that $f_sa^2=a^2f_s$ for all $s\ge 0$, which implies that $a^pf_i=f_ ia^p$ for all $i$.

\smallskip

We now take the $p/2$-th root in (\ref{findings}), which is a monotone operator function since $1<p<2$. Thus
\begin{equation}
\label{ac}
\lambda_{s_{i+1}}^p c_i^{p/2}\le a^p\; \text{ and  }\; \lambda_{s_{i+1}}^p c_i^{p/2}\le f_ia^pf_i=a^pf_i.
\end{equation}
Since $ 4/3\le p<2$, this implies that $2<q\le 4$. Then $t\mapsto t^{q/2}$ is operator convex \cite[Theorem 2.4]{ped} and $(f_ib^2f_i)^{q/2}\le f_i b^qf_i$. By Young's inequality in the commutative algebra generated by $c_i,f_i$ \cite[Lemma 2.2]{efz}
\begin{eqnarray}\label{younging}
\lambda_{s_{i+1}} f_i&=&\lambda_{s_{i+1}} f_i^{1/2}=\lambda_{s_{i+1}} c_i^{1/2}(f_ib^2f_i)^{1/2} \le \frac1p\lambda_{s_{i+1}}^p c_i^{p/2}+ \frac1q (f_ib^2f_i)^{q/2} \\
&\le &\frac1p f_ia^pf_i +\frac1q f_ib^qf_i =f_i\left(\frac1p a^p+\frac1q b^q\right)f_i=f_iDf_i,\nonumber
\end{eqnarray}
where $D=\frac1p a^p+\frac1q b^q$ for short. 

Claim: $f_tD=Df_t$ for all $t>0$. Assume that $\pi$ is a partition of $[0,t]$. Summing over $i$, we obtain $\sum_i \lambda_{s_{i+1}} f_i\le \sum_i f_iDf_i$, and taking traces 
$$
\sum\limits_i\lambda_{s_{i+1}} (s_{i+1}-s_i) \le \sum_i\tau(f_iD)=\tau(Df_t)\le \int_0^t \mu_s(D)ds=\int_0^t\lambda_s ds<\infty
$$
by (\ref{intflag}), Remark \ref{intemu} and the assumption on $a,b$ (recall $\loc2\subset \lo1$). Refining the partition $\pi$, it follows that $\int_0^t\lambda_s ds = \tau(Df_t)$. Since $\lambda_s=\mu_s(D)$ and $\tau(f_t)=t$, Lemma \ref{nojodan} implies that  $f_tD=Df_t$.

\smallskip

Claim: $D=\int_0^\infty \lambda_s df(s)$. Since $t$ was arbitrary, $f_iD=Df_i$ also holds. Returning to the previous inequality (\ref{younging}) we now sum over $i$ to obtain
$$
\sum_i\lambda_{s_{i+1}}f_i\le \sum_i f_iDf_i =\sum_i f_i D= f_t D=f_tD.
$$
Let $\overline{D}=\int_0^\infty \lambda_s df(s)$, then $(1-f)\overline{D}=0$ and $\overline{D}f_t=\int_0^t\lambda_s df(s)$. Refining the partition $\pi$ of $[0,t]$ we obtain $\overline{D}f_t\le f_t D$, and since $\tau(\overline{D}f_t)=\tau(f_tD)=\int_0^t\lambda_s ds$, it must be $\overline{D}f_t=Df_t=f_tD$ for each $t>0$. Recall $f=\bigvee_t f_t$ is the union of the projections $f_t$, then clearly $\overline{D}=Df=fD$; on the other hand
$$
\mu_s(\overline{D})=\lambda_s=\mu_s(D)=\mu_s(Df+(1-f)D)=\mu_s(\overline{D}+(1-f)D)
$$
and by Lemma \ref{singv}.3 it is only possible if $(1-f)D=0$, or equivalently 
$D=\overline{D}$.

\smallskip

Claim: $a$ commutes with $b$, $b$ commutes with all $f_t$ and $b^qf=b^q$. Since $a^p$ commutes with all $f_s$, then $a^p$ commutes with 
$$
D=\int_0^{\infty}\lambda_sdf(s)=\frac1p a^p+\frac1q b^q.
$$
Then $a^p$ commutes with $b^q$ or equivalently, $a$ commutes with $b$. Note that since $f=p_a$, then 
$$
\frac1p a^p+\frac1q b^q=D=Df=\frac1p a^p+\frac1q b^qf,
$$
therefore $b^qf=b^q$. Since $a^p$ commutes with all $f_t$ and $\frac1q b^q=D-\frac1p a^p$, then $b^q$ commutes with all $f_t$.

\smallskip

Claim: $f_t=e_t$ for all $t$. Recall that for all $t$, $bf_t=e_tbf_t$, therefore $f_tbe_t=f_tb$. Since $f_t$ commutes with $b$, $bf_te_t=bf_t$; since $b$ is injective, $f_te_t=f_t$. Therefore $f_t=f_te_t=e_tf_te_t\le e_t$, and since $\tau(f_t)=\tau(e_t)=t$, it must be $f_t=e_t$ for all $t$.

\smallskip

Finally,
$$
|ab|=ab=\int_0^{\infty}\lambda_s df(s)=\frac1p a^p+\frac1q b^q.
$$
Let $a_t=af_t,b_t=bf_t$, then  $\frac1p\mu_s(a_t)^p\le \mu_s(D_t)=\lambda_s\in \lo1$ for $0<s<t$ and likewise with $b$. This means that $a_t^p,b_t^q\in \lo1$ and
$$
|a_tb_t|=a_tb_t=abf_t=\int_0^t \lambda_s df(s)=\frac1p a_t^p+\frac1q b_t^q,
$$
and  all the operators involved have \textit{finite} trace. Farenick and Manjegani proved that in that case (see \cite[Theorem 3.1]{fm} or \cite[Theorem 2.1]{m}), it must be $a^pf_t=a_t^p=b_t^q=b^qf_t$. We give here an alternative argument: taking traces
$$
\frac1p\|a_t\|_p^p+\frac1q\|b_t\|_q^q=\tau\left(\frac1p a_t^p+\frac1q b_t^q\right)=\tau(|a_tb_t|)=\|a_tb_t\|_1\le \|a_t\|_p\|b_t\|_q\le \frac1p+ \|a_t\|_p^p+\frac1q\|b_t\|_q^q
$$
by the operator H\"older inequality (applied to $\|a_tb_t\|_1$) and Young's numeric inequality (applied to $\|a_t\|_p,\|b_t\|_p$). This implies $\|a_tb_t\|_1=\|a_t\|_p\|b_t\|_q$, and this is only possible if $a_t^p=b_t^q$ \cite{dixmier,holg}. Since this holds for all $t>0$,  $a^p=a^pf=b^qf=b^q$ as we claimed.

\medskip

\textbf{Case $1<p<4/3$.} This implies that $q>4$, but since the ranges of $a$ and $b$ still match by Proposition \ref{pnoes2conti}, we can assume that $b$ is injective with dense range, and the computation goes through the same lines, modifying the step regarding   the commutative operator Young inequality (\ref{younging}) according to \cite[Theorem 2]{ando} or \cite[Proposition 2.3]{efz}. 

\medskip

\textbf{Case $p=q=2$}. First note that 
$$
\frac12 \mu_s(a)^2\le \mu_s\left(\frac12 a^2+\frac12 b^2\right)=\mu_s(ab)\in \loc2\subset \lo1,
$$
therefore $\mu_s(a)\in \loc2$ and likewise with $b$. Proposition \ref{pnoes2conti} is of no use here, therefore it suffices to assume $\mu_s(ab)\in \lo1$.

Let $\tilde{a}=(p_ba^2p_b)^{1/2}$. Then $\ran(\tilde{a})\subset\ran(b)$ and $b\tilde{a}^2b=ba^2b$, therefore $|\tilde{a}b|=|ab|$. Hence
\begin{eqnarray}
\mu_s\left(\frac12 \tilde{a}^2+\frac12 b^2\right) &=& \mu_s\left(p_b(\frac12 a^2+\frac12 b^2)p_b\right)\le \mu_s\left(\frac12 a^2+\frac12 b^2\right)=\mu_s(ab)=\mu_s(\tilde{a}b)\nonumber\\
&\le& \mu_s\left(\frac12 \tilde{a}^2+\frac12 b^2\right)\nonumber
\end{eqnarray}
by (\ref{ysf}) applied to the pair $\tilde{a},b$. Therefore, for all $s\ge 0$,
$$
\mu_s(\tilde{a}b)=\mu_s\left(\frac12 \tilde{a}^2+\frac12 b^2\right).
$$
Since $\ran(\tilde{a})\subset \overline{\ran(b)}$, we can assume that $b$ is injective, and argumenting as in the previous cases, arrive to $\tilde{a}^2=b^2$, that is $p_ba^2p_b=b^2$. In particular $\mu_s(b)^2\le \mu_s(a)^2$ for all $s> 0$. Reversing the argument, we also get $p_ab^2p_a=a^2$, therefore $\mu_s(a)=\mu_s(b)$ for all $s> 0$.

Let $\{b_s\}_{s\ge 0}$ be a complete flag for $b=\int_0^{\infty}\mu_s(b) db(s)$ with $\tau(b_t)=t$. Then for all $t\ge 0$,  $b_t$ commutes with $b$, we have  $b_tb=\int_0^t\mu_s(b)db(s)$ and since $b_t\le p_b$, $b_tp_b=b_t$. Therefore from $p_ba^2p_b=b^2$ we obtain $b_ta^2b_t=b_tb^2$, which implies that 
$$
\tau(b_ta^2)=\tau(b_tb^2)=\int_0^t\mu_s(b)^2ds=\int_0^t\mu_s(a)^2ds<\infty.
$$
By Lemma \ref{nojodan}, this is only possible if $a$ commutes with $b_t$. Therefore, $a$ commutes with $b$, then from $p_a b^2p_a=a^2$ we have $bp_a=p_ab=a$. But
$$
\mu_s(a)=\mu_s(b)=\mu_s(bp_a+ (1-p_a)b)=\mu_s(a+(1-p_a)b)
$$
implies (Lemma \ref{singv}.6) $b=p_ab=a$.
\end{proof}

\bigskip

\begin{rem}\label{rangos}
As the proof goes, it suffices to consider $ab\in \lo1$ if either
$$
\ran(a)\subset\overline{\ran(b)}\mbox{ or }\ran(b)\subset\overline{\ran(a)}.
$$
\end{rem}

\begin{coro}\label{acot}
Let $0\le a,b\in \a\cap \ka$  and assume
$$
\mu_s(ab)=\mu_s\left(\frac1p a^p +\frac1q b^q\right)\quad \textit{ for all }s>0.
$$
Then $a^p=b^q$.
\end{coro}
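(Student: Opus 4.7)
The plan is to reduce the corollary immediately to Theorem \ref{sii} by checking that the hypothesis $ab\in \loc2$ is automatic under the boundedness assumption $a,b\in \a\cap\ka$.

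First, since $\ka$ is a bilateral ideal of $\widetilde{\a}$ (Remark \ref{compa}) and $a\in\ka$, $b\in\a$, the product $ab$ is $\tau$-compact. Moreover $ab\in\a$ is bounded, with $\|ab\|\le \|a\|\|b\|<\infty$.

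Second, for any bounded positive operator $x\in\a$ we have $\mu_s(x)\le \|x\|$ for every $s>0$, by the very definition of the singular numbers. Hence for every $\delta>0$ and every $r\ge 1$,
\[
\int_0^\delta \mu_s(x)^r\,ds \;\le\; \|x\|^r\,\delta \;<\;\infty,
\]
so $x\in\loclp$ for every $p\ge 1$. Applying this to $x=ab$ (or to $|ab|$, whose singular numbers coincide with those of $ab$), we conclude that $ab\in\loc2$.

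Third, the remaining hypotheses of Theorem \ref{sii}, namely $0\le a,b\in\ka$ and $\mu_s(ab)=\mu_s(\tfrac1p a^p+\tfrac1q b^q)$ for all $s>0$, are part of the data of the corollary. Thus Theorem \ref{sii} applies directly (covering both the cases $1<p\ne 2$ and $p=q=2$, since in the latter case the weaker requirement $ab\in\lo1$ is also satisfied for the same reason) and yields $a^p=b^q$. The only conceptual point is the verification of the integrability hypothesis, which is trivial precisely because boundedness gives a uniform bound on the singular-number function.
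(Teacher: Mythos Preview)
Your argument is correct and is exactly the reasoning the paper has in mind: the corollary is stated without proof because boundedness of $a,b$ makes $ab$ bounded, whence $\mu_s(ab)\le\|ab\|$ and the local integrability condition $ab\in\loc2$ is trivially satisfied, so Theorem \ref{sii} applies directly. The paper makes this explicit in the sentence following Theorem \ref{elteo}, noting that the hypothesis $ab\in\loc2$ is unnecessary when $ab$ is bounded.
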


\subsection{Symmetric norms}

We close the paper putting this result in context with the theory of symmetric norms on $\widetilde{\a}$, see for instance \cite{dodds} and the references therein.

\medskip

We say that a symmetric norm $\|\cdot\|_E$ is \textit{strictly increasing} if $x,y\in E\subset\widetilde{\a}$, $\mu_s(x)\le \mu_s(y)$ for all $s>0$ and $\|x\|_E=\|y\|_E$ implies $\mu_s(x)=\mu_s(y)$ for all $s>0$. All ${\mathscr L}^p$-norms are strictly increasing for $1\le p<\infty$, while the uniform norm or the Ky-Fan norms $\|x\|_{(t)}=\int_0^t \mu_s(x)ds$ are not.

\begin{teo}\label{elteo}
Let $a,b\in \ka\cap \loc2$. If $p>1$ and $1/p+1/q=1$, then the following are equivalent:
\begin{enumerate}
\item $|a|^p=|b|^q$.
\item $z|ab^*|z^* =\frac1p |a|^p +\frac1q |b|^q$ for some contraction $z\in \a$ 
\item $\|z|ab^*|w\|_{E} =\|\frac1p |a|^p +\frac1q |b|^q\|_{E}$ for a pair of contractions $z,w\in\a$ and $\|\cdot\|_{E}$ a \textbf{strictly increasing} symmetric norm. 
\item $\mu_s(ab^*)=\mu_s\left(\frac1p |a|^p+\frac1q |b|^q\right)$ for all $s>0$.
\end{enumerate}
\end{teo}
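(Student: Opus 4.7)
The plan is to close the cycle $(1)\Rightarrow(2)\Rightarrow(3)\Rightarrow(4)\Rightarrow(1)$. The implication $(4)\Rightarrow(1)$ is the content of Theorem \ref{sii} applied to $|a|$ and $|b|$, after recalling $\mu_s(ab^*)=\mu_s(|a||b|)$ from Lemma \ref{singv}.2. The only point needing verification is that the hypothesis $|a||b|\in\loc2$ of Theorem \ref{sii} is inherited from $a,b\in\ka\cap\loc2$; this should follow from Young's inequality (\ref{ysf}) applied to $\mu_s(|a||b|)$, bounding it by $\mu_s(\nicefrac{1}{p}|a|^p+\nicefrac{1}{q}|b|^q)$ and a Hölder-type estimate on the majorant. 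I expect this to be the one technical hypothesis-matching step.

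For $(1)\Rightarrow(2)$, write $b=\nu|b|$ for the polar decomposition. The Fuglede-type identity from the proof of Lemma \ref{singv}.2 gives $|ab^*|=\nu\,||a||b||\,\nu^*$. Under the hypothesis $|a|^p=|b|^q$, functional calculus yields $|b|=|a|^{p/q}$, and since $1/p+1/q=1$ implies $1+p/q=p$, one obtains $|a||b|=|a|^p\ge 0$, so $||a||b||=|a|^p$ and hence $|ab^*|=\nu|a|^p\nu^*$. Setting $z=\nu^*$ (a contraction), and using that the support projections of $|a|^p$ and $|b|^q$ coincide with the initial projection $\nu^*\nu$ of $\nu$, we get
\[
z|ab^*|z^*=\nu^*\nu\,|a|^p\,\nu^*\nu=|a|^p=\nicefrac{1}{p}|a|^p+\nicefrac{1}{q}|b|^q,
\]
which is (2). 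The implication $(2)\Rightarrow(3)$ is immediate: take $w=z^*$ in (2) and apply any strictly increasing symmetric norm, since the two operators are already equal.

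For $(3)\Rightarrow(4)$, the key observation is the chain of inequalities
\[
\mu_s(z|ab^*|w)\le\|z\|\|w\|\,\mu_s(|ab^*|)=\mu_s(ab^*)\le\mu_s\bigl(\nicefrac{1}{p}|a|^p+\nicefrac{1}{q}|b|^q\bigr),
\]
valid for every $s>0$ by Lemma \ref{singv}.1 and Young's inequality (\ref{ysf}). Since $z,w$ are contractions and $\|\cdot\|_E$ is strictly increasing, the equality of $E$-norms on the two ends forces all three $s$-number functions to coincide pointwise; the middle identity is exactly (4). The main conceptual difficulty of the whole theorem sits entirely in the already-proved $(4)\Rightarrow(1)$ via Theorem \ref{sii}; the other implications amount to functional-calculus bookkeeping together with the definition of strict monotonicity, with the $\loc2$-hypothesis inheritance being the only genuine detail requiring care.
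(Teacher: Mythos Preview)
The paper omits the proof entirely, citing the identical argument for compact operators in \cite[Theorem 2.13]{hycomp}; your cycle $(1)\Rightarrow(2)\Rightarrow(3)\Rightarrow(4)\Rightarrow(1)$ with Theorem \ref{sii} carrying the last implication is exactly that argument, and the steps $(1)\Rightarrow(2)\Rightarrow(3)\Rightarrow(4)$ are correctly executed.

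The gap you flagged, however, is real and does not close the way you sketch. From $a,b\in\loc2$ one cannot in general deduce $|a||b|\in\loc2$. For a concrete obstruction, take commuting positive $\tau$-compact operators with $\mu_s(a)=s^{-2/5}$ and $\mu_s(b)=s^{-1/5}$ near $s=0$; both lie in $\loc2$, yet $\mu_s(|a||b|)=s^{-3/5}$ and $\int_0^\delta s^{-6/5}\,ds=\infty$, so $|a||b|\notin\loc2$. With $p=3/2$, $q=3$ one even has $|a|^p=|b|^q$, so all four conditions of the theorem hold while the hypothesis of Theorem \ref{sii} fails. Your proposed route through the Young bound $\mu_s(|a||b|)\le\mu_s\bigl(\nicefrac1p|a|^p+\nicefrac1q|b|^q\bigr)$ does not help: controlling $\int_0^\delta\mu_s(\cdot)^2\,ds$ for the right-hand side via $\mu_{2s}(x+y)\le\mu_s(x)+\mu_s(y)$ and convexity would require $|a|\in\mathscr{L}^{2p}_{loc}$ and $|b|\in\mathscr{L}^{2q}_{loc}$, which is strictly stronger than $\loc2$.

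This is almost certainly a misprint in the stated hypothesis: the remark immediately following the theorem in the paper refers explicitly to ``the hypothesis $ab\in\loc2$'', in line with Theorem \ref{sii}. Under the intended hypothesis $ab^*\in\loc2$ (equivalently $|a||b|\in\loc2$), your $(4)\Rightarrow(1)$ step is immediate and the whole proof is complete as written.
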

\begin{proof}
The proof is much like as in \cite[Theorem 2.13]{hycomp}, therefore it is omitted.
\end{proof}

As in Theorem \ref{sii}, Remark \ref{rangos} or Corollary \ref{acot}, the hypothesis $ab\in \loc2$ is unnecessary when $ab$ is bounded, and can be relaxed to $ab\in \lo1$ if $p=q=2$ or if there is an inclusion of ranges.

\bigskip

\noindent
Gabriel Larotonda\\
Instituto ''Alberto P. Calder\'on'' (CONICET)\\
and Universidad de Buenos Aires, Argentina  \\
e-mail: glaroton@dm.uba.ar

\end{document}